\newtheorem{theorem}{Theorem}
\theoremstyle{plain}
\newtheorem{definition}{Definition}
\newtheorem{lemma}{Lemma}
\numberwithin{equation}{section}
\begin{document}
\title[On Hermite Hadamard-type inequalities]{On Hermite Hadamard-type
inequalities for strongly $\varphi $-convex functions }
\author{Mehmet Zeki SARIKAYA}
\address{Department of Mathematics, \ Faculty of Science and Arts, D\"{u}zce
University, D\"{u}zce-TURKEY}
\email{sarikayamz@gmail.com}
\subjclass[2000]{ 26D10, 26A51,46C15}
\keywords{Hermite-Hadamard's inequalities, $\varphi $-convex functions,
strongly convex with modulus $c>0$.}

\begin{abstract}
In this paper, we introduce the notion of strongly $\varphi $-convex
functions with respect to $c>0$ and present some properties and
representation of such functions. We obtain a characterization of inner
product spaces involving the notion of strongly $\varphi $-convex functions.
Finaly, a version of Hermite Hadamard-type inequalities for strongly $%
\varphi $-convex functions are established.
\end{abstract}

\maketitle

\section{Introduction}

The inequalities discovered by C. Hermite and J. Hadamard for convex
functions are very important in the literature (see, e.g.,\cite{dragomir1},%
\cite[p.137]{pecaric}). These inequalities state that if $f:I\rightarrow 
\mathbb{R}$ is a convex function on the interval $I$ of real numbers and $%
a,b\in I$ with $a<b$, then 
\begin{equation}
f\left( \frac{a+b}{2}\right) \leq \frac{1}{b-a}\int_{a}^{b}f(x)dx\leq \frac{%
f\left( a\right) +f\left( b\right) }{2}.  \label{E1}
\end{equation}%
The inequality (\ref{E1}) has evoked the interest of many mathematicians.
Especially in the last three decades numerous generalizations, variants and
extensions of this inequality have been obtained, to mention a few, see (%
\cite{bakula}-\cite{set2}) and the references cited therein.

Let us consider a function $\varphi :[a,b]\rightarrow \lbrack a,b]$ where $%
[a,b]\subset \mathbb{R}$. Youness have defined the $\varphi $-convex
functions in \cite{youness}:

\begin{definition}
A function $f:[a,b]\rightarrow \mathbb{R}$ is said to be $\varphi $- convex
on $[a,b]$ if for every two points $x\in \lbrack a,b],y\in \lbrack a,b]$ and 
$t\in \lbrack 0,1]$ the following inequality holds:%
\begin{equation*}
f(t\varphi (x)+(1-t)\varphi (y))\leq tf(\varphi (x))+(1-t)f(\varphi (y)).
\end{equation*}
\end{definition}

In \cite{cristescu1}, Cristescu proved the followig results for the $\varphi 
$-convex functions

\begin{lemma}
\label{l} For $f:[a,b]\rightarrow \mathbb{R}$, the following statements are
equivalent:

(i) $f$ is $\varphi $-convex functions on $[a,b]$,

(ii) for every $x,y\in \lbrack a,b]$, the mapping $g:[0,1]\rightarrow 
\mathbb{R},\ g(t)=f(t\varphi (x)+(1-t)\varphi (y))$ is classically convex on 
$[0,1].$
\end{lemma}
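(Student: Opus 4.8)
The plan is to prove the two implications separately, noting that (ii) $\Rightarrow$ (i) is immediate from the definition of convexity while (i) $\Rightarrow$ (ii) is the substantive direction. For (ii) $\Rightarrow$ (i), fix $x,y\in[a,b]$ and assume $g(t)=f(t\varphi(x)+(1-t)\varphi(y))$ is classically convex on $[0,1]$. Since every $t\in[0,1]$ can be written as $t=t\cdot 1+(1-t)\cdot 0$, applying convexity of $g$ at the endpoints $0$ and $1$ gives $g(t)\le t\,g(1)+(1-t)\,g(0)$; because $g(1)=f(\varphi(x))$ and $g(0)=f(\varphi(y))$, this is exactly the defining inequality of $\varphi$-convexity. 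So this direction requires nothing beyond evaluating $g$ at the two endpoints.

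For (i) $\Rightarrow$ (ii), fix $x,y\in[a,b]$ and set $\ell(t)=t\varphi(x)+(1-t)\varphi(y)$, so that $g=f\circ\ell$. The key structural fact is that $\ell:[0,1]\to[a,b]$ is affine: for $t_1,t_2\in[0,1]$ and $\lambda\in[0,1]$ one has $\ell(\lambda t_1+(1-\lambda)t_2)=\lambda\,\ell(t_1)+(1-\lambda)\,\ell(t_2)$. Hence
\begin{equation*}
g(\lambda t_1+(1-\lambda)t_2)=f\bigl(\lambda\,\ell(t_1)+(1-\lambda)\,\ell(t_2)\bigr),
\end{equation*}
and the goal reduces to bounding the right-hand side by $\lambda f(\ell(t_1))+(1-\lambda)f(\ell(t_2))=\lambda g(t_1)+(1-\lambda)g(t_2)$. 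This is precisely an instance of the $\varphi$-convexity inequality applied to the two points $\ell(t_1),\ell(t_2)$ with weight $\lambda$.

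The main obstacle lies in this last step: the defining inequality of $\varphi$-convexity may be invoked only for arguments of the form $\varphi(\cdot)$, whereas $\ell(t_1)$ and $\ell(t_2)$ are convex combinations of $\varphi(x)$ and $\varphi(y)$ and need not themselves belong to the range $\varphi([a,b])$. I would therefore perform the affine substitution above and then secure the two intermediate points as values of $\varphi$ by using that the range $\varphi([a,b])$ is convex — for instance when $\varphi$ is continuous, so that $\varphi([a,b])$ is a subinterval of $[a,b]$ containing the whole segment joining $\varphi(y)$ and $\varphi(x)$. Granting this, each $\ell(t_i)\in\varphi([a,b])$, the $\varphi$-convexity inequality applies verbatim, and the reduction closes to yield convexity of $g$. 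I expect that securing (or hypothesizing) this convexity of the range is the delicate point of the whole argument, since the affine-substitution computation itself is entirely routine.
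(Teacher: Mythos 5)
Your proof is correct, but there is nothing in the paper to compare it against: Lemma \ref{l} is quoted from \cite{cristescu1} and the paper gives no proof of it. Your (ii)$\Rightarrow$(i) direction, evaluating the convexity inequality for $g$ at the endpoint values $g(1)=f(\varphi(x))$ and $g(0)=f(\varphi(y))$, is exactly right, and your (i)$\Rightarrow$(ii) argument via the affine map $\ell(t)=t\varphi(x)+(1-t)\varphi(y)$ is the natural one. More importantly, the obstacle you isolate is genuine, not a pedantic worry: the $\varphi$-convexity inequality constrains $f$ only at convex combinations of two points of the range $\varphi([a,b])$, so to invoke it at $\ell(t_1)$ and $\ell(t_2)$ one must know these are themselves values of $\varphi$. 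Without that, (i)$\Rightarrow$(ii) is false as stated. For instance, take $[a,b]=[0,1]$, $\varphi=0$ on $[0,1/2]$ and $\varphi=1$ on $(1/2,1]$; then $\varphi$-convexity of $f$ only forces $f$ to lie below the chord joining $(0,f(0))$ and $(1,f(1))$, so any $f$ with $f(0)=f(1)=0$ and $f\le 0$ that is not convex (say $f=0$ on $[0,1/2]$ and $f(s)=-\sin\left(2\pi(s-1/2)\right)$ on $[1/2,1]$) is $\varphi$-convex while $g(t)=f(t)$ fails to be convex. Your fix --- continuity of $\varphi$, which makes $\varphi([a,b])$ a subinterval of $[a,b]$ containing the whole segment from $\varphi(y)$ to $\varphi(x)$, hence each $\ell(t_i)$ of the form $\varphi(p_i)$ --- is precisely the hypothesis in force wherever the lemma is applied in this paper (every theorem using Lemma \ref{l} assumes $\varphi$ continuous), so your argument both proves the lemma in the setting where it is actually used and exposes that the lemma as literally stated, with no hypothesis on $\varphi$, is too strong.
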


Obviously, if function $\varphi $ is the identity, then the classical
convexity is obtained from the previous definition. Many properties of the $%
\varphi $-convex functions can be found, for instance, in \cite{cristescu}, 
\cite{cristescu1},\cite{youness}.

Recall also that a function $f:I\rightarrow \mathbb{R}$ is called strongly
convex with modulus $c>0,$ if%
\begin{equation*}
f\left( tx+\left( 1-t\right) y\right) \leq tf\left( x\right) +\left(
1-t\right) f\left( y\right) -ct(1-t)(x-y)^{2}
\end{equation*}%
for all $x,y\in I$ and $t\in (0,1).$ Strongly convex functions have been
introduced by Polyak in \cite{polyak}\ and they play an important role in
optimization theory and mathematical economics. Various properties and
applicatins of them can be found in the literature see (\cite{polyak}-\cite%
{angu}) and the references cited therein.

In this paper, we introduce the notion of strongly $\varphi $-convex
functions defined in normed spaces and present some properties of them. In
particular, we obtain a representation of strongly $\varphi $-convex
functions in inner product spaces and, using the methods of \cite{nik} and 
\cite{angu}, we give a characterization of inner product spaces, among
normed spaces, that involves the notion of strongly $\varphi $-convex
function. Finally, a version of Hermite--Hadamard-type inequalities for
strongly $\varphi $-convex functions is presented. This result generalizes
the Hermite--Hadamard-type inequalities obtained in \cite{mer} for strongly
convex functions, and for $c=0$, coincides with the classical
Hermite--Hadamard inequalities, as well as the corresponding
Hermite--Hadamard-type inequalities for $\varphi $-convex functions in \cite%
{cristescu1}.

\section{Main Results{}}

In what follows $(X,\left\Vert .\right\Vert )$ denotes a real normed space, $%
D$ stands for a convex subset of $X$, $\varphi :D\rightarrow D$ is a given
function and $c$ is a positive constant. We say that a function $%
f:D\rightarrow \mathbb{R}$ is strongly $\varphi $-convex with modulus $c$ if

\begin{equation}
f(t\varphi (x)+(1-t)\varphi (y))\leq tf(\varphi (x))+(1-t)f(\varphi
(y))-ct(1-t)\left\Vert \varphi (x)-\varphi (y)\right\Vert ^{2}  \label{1}
\end{equation}%
for all $x,y\in D$ and $t\in \lbrack 0,1]$. We say that $f$ is strongly $%
\varphi $-midconvex with modulus $c$ if \ (\ref{1}) is assumed only for $t=%
\frac{1}{2}$, that is%
\begin{equation*}
f\left( \frac{\varphi (x)+\varphi (y)}{2}\right) \leq \frac{f(\varphi
(x))+(f(\varphi (y))}{2}-\frac{c}{4}\left\Vert \varphi (x)-\varphi
(y)\right\Vert ^{2},\ \text{for }x,y\in D.
\end{equation*}%
The notion of $\varphi $-convex function corresponds to the case $c=0$. We
start with the following lemma which give some relationships between
strongly $\varphi $-convex functions and $\varphi $-convex functions in the
case where $X$ is a real inner product space (that is, the norm $\left\Vert
.\right\Vert $ is induced by an inner product: $\left\Vert .\right\Vert
:=<x|x>$).

\begin{lemma}
\label{z} Let $(X,\left\Vert .\right\Vert )$ be a real inner product space, $%
D$ be a convex subset of $X$ and $c$ be a positive constant and $\varphi
:D\rightarrow D$.

i) A function $f:D\rightarrow \mathbb{R}$ is strongly $\varphi $-convex with
modulus $c$ if and only if the function $g=f-c\left\Vert .\right\Vert ^{2}$
is $\varphi $-convex.

ii) A function $f:D\rightarrow \mathbb{R}$ is strongly $\varphi $-midconvex
with modulus $c$ if and only if the function $g=f-c\left\Vert .\right\Vert
^{2}$ is $\varphi $-midconvex.
\end{lemma}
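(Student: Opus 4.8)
The plan is to prove (i) by a direct substitution that reduces the strong $\varphi$-convexity inequality for $f$ to the plain $\varphi$-convexity inequality for $g$, exploiting the fact that in an inner product space the squared norm obeys the quadratic identity
\begin{equation*}
\left\Vert tu+(1-t)v\right\Vert ^{2}=t\left\Vert u\right\Vert ^{2}+(1-t)\left\Vert v\right\Vert ^{2}-t(1-t)\left\Vert u-v\right\Vert ^{2}
\end{equation*}
for all $u,v\in X$ and $t\in \lbrack 0,1]$. I would first verify this identity by expanding $\left\Vert tu+(1-t)v\right\Vert ^{2}=t^{2}\left\Vert u\right\Vert ^{2}+2t(1-t)\left\langle u,v\right\rangle +(1-t)^{2}\left\Vert v\right\Vert ^{2}$ and collecting the coefficients of $\left\Vert u\right\Vert ^{2}$ and $\left\Vert v\right\Vert ^{2}$, which are both $-t(1-t)$; this is the only place where the inner product structure (rather than a mere norm) is used, and it is the crux of the equivalence.

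Next I would apply the identity with $u=\varphi (x)$ and $v=\varphi (y)$. Writing out the definition of $\varphi $-convexity for $g=f-c\left\Vert .\right\Vert ^{2}$, namely
\begin{equation*}
f(t\varphi (x)+(1-t)\varphi (y))-c\left\Vert t\varphi (x)+(1-t)\varphi (y)\right\Vert ^{2}\leq t\left( f(\varphi (x))-c\left\Vert \varphi (x)\right\Vert ^{2}\right) +(1-t)\left( f(\varphi (y))-c\left\Vert \varphi (y)\right\Vert ^{2}\right) ,
\end{equation*}
I would move the three squared-norm terms to the right-hand side and substitute the identity for $\left\Vert t\varphi (x)+(1-t)\varphi (y)\right\Vert ^{2}$. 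The $\left\Vert \varphi (x)\right\Vert ^{2}$ and $\left\Vert \varphi (y)\right\Vert ^{2}$ contributions cancel, leaving exactly the residual term $-ct(1-t)\left\Vert \varphi (x)-\varphi (y)\right\Vert ^{2}$, so the inequality collapses to (\ref{1}). Since every manipulation is an equivalence, the two conditions are equivalent, which proves (i).

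Finally, for (ii) I would simply repeat the argument with $t$ fixed at $\frac{1}{2}$: the same identity gives $\left\Vert \frac{\varphi (x)+\varphi (y)}{2}\right\Vert ^{2}=\frac{1}{2}\left\Vert \varphi (x)\right\Vert ^{2}+\frac{1}{2}\left\Vert \varphi (y)\right\Vert ^{2}-\frac{1}{4}\left\Vert \varphi (x)-\varphi (y)\right\Vert ^{2}$, and substituting into the midpoint form of the $\varphi $-convexity of $g$ reproduces the strong $\varphi $-midconvexity inequality for $f$. I do not anticipate any genuine obstacle here: the whole argument is an exact algebraic rearrangement, and the only conceptual point worth flagging is that it is the parallelogram-type identity above, valid in inner product spaces but not in general normed spaces, that makes the clean correspondence between the modulus term $ct(1-t)\left\Vert \varphi (x)-\varphi (y)\right\Vert ^{2}$ and the quadratic perturbation $c\left\Vert .\right\Vert ^{2}$ possible.
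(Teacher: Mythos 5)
Your proposal is correct and rests on exactly the same computation as the paper's proof: expanding $\left\Vert t\varphi(x)+(1-t)\varphi(y)\right\Vert^{2}$ via the inner product, which the paper does inline in each direction of (i) and, in the $t=\tfrac{1}{2}$ case, via the parallelogram law for (ii). Your packaging of that expansion as the single identity $\left\Vert tu+(1-t)v\right\Vert^{2}=t\left\Vert u\right\Vert^{2}+(1-t)\left\Vert v\right\Vert^{2}-t(1-t)\left\Vert u-v\right\Vert^{2}$ is a tidier organization --- it yields both implications at once and makes (ii) a literal special case --- but it is the same underlying argument, not a different route.
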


\begin{proof}
i) Assume that $f$ is strongly $\varphi $-convex with modulus $c.$ Using
properties of the inner product, we obtain%
\begin{eqnarray*}
&&g(t\varphi (x)+(1-t)\varphi (y)) \\
&& \\
&=&f(t\varphi (x)+(1-t)\varphi (y))-c\left\Vert t\varphi (x)+(1-t)\varphi
(y)\right\Vert ^{2} \\
&& \\
&\leq &tf(\varphi (x))+(1-t)f(\varphi (y))-ct(1-t)\left\Vert \varphi
(x)-\varphi (y)\right\Vert ^{2}-c\left\Vert t\varphi (x)+(1-t)\varphi
(y)\right\Vert ^{2} \\
&& \\
&\leq &tf(\varphi (x))+(1-t)f(\varphi (y))-c\left( t(1-t)\left[ \left\Vert
\varphi (x)\right\Vert ^{2}-2<\varphi (x)|\varphi (y)>+\left\Vert \varphi
(y)\right\Vert ^{2}\right] \right. \\
&& \\
&&\left. -\left[ t^{2}\left\Vert \varphi (x)\right\Vert ^{2}+2t(1-t)<\varphi
(x)|\varphi (y)>+(1-t)\left\Vert \varphi (y)\right\Vert ^{2}\right] \right)
\\
&& \\
&=&tf(\varphi (x))+(1-t)f(\varphi (y))-ct\left\Vert \varphi (x)\right\Vert
^{2}-c(1-t)\left\Vert \varphi (y)\right\Vert ^{2} \\
&& \\
&=&tg(\varphi (x))+(1-t)g(\varphi (y))
\end{eqnarray*}%
which gives that $g$ is $\varphi $-convex function.

Conversely, if $g$ is $\varphi $-convex function, then we get%
\begin{eqnarray*}
f(t\varphi (x)+(1-t)\varphi (y)) &=&g(t\varphi (x)+(1-t)\varphi
(y))+c\left\Vert t\varphi (x)+(1-t)\varphi (y)\right\Vert ^{2} \\
&& \\
&\leq &tg(\varphi (x))+(1-t)g(\varphi (y))+c\left\Vert t\varphi
(x)+(1-t)\varphi (y)\right\Vert ^{2} \\
&& \\
&=&t\left[ g(\varphi (x))+c\left\Vert \varphi (x)\right\Vert ^{2}\right]
+(1-t)\left[ g(\varphi (y))+c\left\Vert \varphi (y)\right\Vert ^{2}\right] \\
&& \\
&&-ct(1-t)\left[ \left\Vert \varphi (x)\right\Vert ^{2}-2<\varphi
(x)|\varphi (y)>+\left\Vert \varphi (y)\right\Vert ^{2}\right] \\
&& \\
&=&tf(\varphi (x))+(1-t)f(\varphi (y))-ct(1-t)\left\Vert \varphi (x)-\varphi
(y)\right\Vert ^{2}
\end{eqnarray*}%
which shows that $f$ is strongly $\varphi $-convex with modulus $c.$

ii) Assume now that $f$ is strongly $\varphi $-midconvex with modulus $c.$
Using the parallelogram law, we have%
\begin{eqnarray*}
g\left( \frac{\varphi (x)+\varphi (y)}{2}\right) &=&f\left( \frac{\varphi
(x)+\varphi (y)}{2}\right) -c\left\Vert \frac{\varphi (x)+\varphi (y)}{2}%
\right\Vert ^{2} \\
&& \\
&\leq &\frac{f(\varphi (x))+f(\varphi (y))}{2}-\frac{c}{4}\left\Vert \varphi
(x)-\varphi (y)\right\Vert ^{2}-\frac{c}{4}\left\Vert \varphi (x)+\varphi
(y)\right\Vert ^{2} \\
&& \\
&=&\frac{f(\varphi (x))+f(\varphi (y))}{2}-\frac{c}{4}\left( 2\left\Vert
\varphi (x)\right\Vert ^{2}+2\left\Vert \varphi (y)\right\Vert ^{2}\right) \\
&& \\
&=&\frac{g(\varphi (x))+g(\varphi (y))}{2}
\end{eqnarray*}%
which gives that $g$ is $\varphi $-midconvex function.

Similarly, if $g$ is $\varphi $-midconvex function, then we get%
\begin{eqnarray*}
f\left( \frac{\varphi (x)+\varphi (y)}{2}\right) &=&g\left( \frac{\varphi
(x)+\varphi (y)}{2}\right) +c\left\Vert \frac{\varphi (x)+\varphi (y)}{2}%
\right\Vert ^{2} \\
&& \\
&\leq &\frac{g(\varphi (x))+g(\varphi (y))}{2}+\frac{c}{4}\left\Vert \varphi
(x)+\varphi (y)\right\Vert ^{2} \\
&& \\
&=&\frac{g(\varphi (x))+\left\Vert \varphi (x)\right\Vert ^{2}}{2}+\frac{%
g(\varphi (y))+\left\Vert \varphi (y)\right\Vert ^{2}}{2} \\
&& \\
&&+\frac{c}{4}\left( \left\Vert \varphi (x)+\varphi (y)\right\Vert
^{2}-2\left\Vert \varphi (x)\right\Vert ^{2}-2\left\Vert \varphi
(y)\right\Vert ^{2}\right) \\
&& \\
&=&\frac{f(\varphi (x))+f(\varphi (y))}{2}-\frac{c}{4}\left\Vert \varphi
(x)-\varphi (y)\right\Vert ^{2}.
\end{eqnarray*}%
This completes to proof.
\end{proof}

The following example shows that the assumption that $X$ is an inner product
space is essentials in the above lemma.

\textbf{Example.} Let $X=\mathbb{R}^{2}$. Let us consider a function $%
\varphi :\mathbb{R}^{2}\rightarrow \mathbb{R}^{2},$ defined by $\varphi
(x)=x $ for every $x\in \mathbb{R}^{2}$ and $\left\Vert x\right\Vert =\max
\left\{ \left\vert x_{1}\right\vert ,\left\vert x_{2}\right\vert \right\} $
for $x=(x_{1},x_{2})$. Take $f=\left\Vert .\right\Vert ^{2}.$ Then $%
g=f-\left\Vert .\right\Vert ^{2}$ is $\varphi $-convex being the zero
function. However, $f$ is neither strongly $\varphi $-convex with modulus $1$
nor strongly $\varphi $-midconvex with modulus $1.$ Indeed, for $x=(1,0)$
and $y=(0,1),$ we have%
\begin{equation*}
f\left( \frac{x+y}{2}\right) =\frac{1}{2}\geq \frac{3}{4}=\frac{f(x)+f(y)}{2}%
-\frac{1}{4}\left\Vert x-y\right\Vert ^{2}
\end{equation*}%
which this contradicts (\ref{1}).

The assumption that $X$ is an inner product space in Lemma \ref{z} is
essential. Moreover, it appears that the fact that for every $\varphi $%
-convex function $g:X\rightarrow \mathbb{R}$ the function $f=g+c\left\Vert
.\right\Vert ^{2}$ is strongly $\varphi $-convex characterizes inner product
spaces among normed spaces. Similar characterizations of inner product
spaces by strongly convex and strongly $h$-convex functions are presented in 
\cite{nik} and \cite{angu}.

\begin{theorem}
Let $(X,\left\Vert .\right\Vert )$ be a real normed space, $D$ be a convex
subset of $X$ and $\varphi :D\rightarrow D$. Then the following conditions
are equivalent:

i) $(X,\left\Vert .\right\Vert )$ is a real inner product space;

ii) For every $c>0$, $f:D\rightarrow \mathbb{R}$ defined on a convex subset $%
D$ of $X$, the function $f=g+c\left\Vert .\right\Vert ^{2}$ is strongly $%
\varphi $-convex with modulus $c$;

iii) $\left\Vert .\right\Vert ^{2}:X\rightarrow \mathbb{R}$ is strongly $%
\varphi $-convex with modulus $1$.
\end{theorem}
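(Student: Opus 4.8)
The plan is to establish the cyclic chain $(i)\Rightarrow(ii)\Rightarrow(iii)\Rightarrow(i)$; the first two links follow at once from the tools already in hand, and the substance of the theorem lies in recovering the inner product structure in $(iii)\Rightarrow(i)$ through the parallelogram law.

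First I would obtain $(i)\Rightarrow(ii)$ directly from Lemma \ref{z}(i). Assuming $X$ is an inner product space and $g$ is $\varphi$-convex, set $f=g+c\|\cdot\|^2$; then $f-c\|\cdot\|^2=g$ is $\varphi$-convex, so Lemma \ref{z}(i) yields that $f$ is strongly $\varphi$-convex with modulus $c$. For $(ii)\Rightarrow(iii)$ I would simply specialize: the zero function is $\varphi$-convex, so taking $g\equiv 0$ and $c=1$ in $(ii)$ gives that $f=\|\cdot\|^2$ is strongly $\varphi$-convex with modulus $1$, which is exactly $(iii)$.

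The core of the proof is $(iii)\Rightarrow(i)$. Putting $u=\varphi(x)$ and $v=\varphi(y)$ and taking $t=\tfrac12$ in the defining inequality (\ref{1}) for the strong $\varphi$-convexity of $\|\cdot\|^2$ with modulus $1$, then multiplying through by $4$, I get
\begin{equation*}
\|u+v\|^2+\|u-v\|^2\le 2\|u\|^2+2\|v\|^2,
\end{equation*}
valid for all $u,v$ in the range of $\varphi$; this is one half of the parallelogram identity. To produce the reverse inequality I would substitute $u+v$ for $u$ and $u-v$ for $v$ in the displayed relation; since $(u+v)+(u-v)=2u$ and $(u+v)-(u-v)=2v$, this gives $4\|u\|^2+4\|v\|^2\le 2\|u+v\|^2+2\|u-v\|^2$, that is, $2\|u\|^2+2\|v\|^2\le\|u+v\|^2+\|u-v\|^2$. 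The two opposite inequalities together force the parallelogram law $\|u+v\|^2+\|u-v\|^2=2\|u\|^2+2\|v\|^2$, and the Jordan--von Neumann theorem then shows that $\|\cdot\|$ is induced by an inner product, giving $(i)$.

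The main obstacle is precisely this substitution step: the inequality delivered by $(iii)$ is only guaranteed for arguments in the range of $\varphi$, so replacing $u,v$ by $u\pm v$ is legitimate only when that range is stable under the maps $(u,v)\mapsto u\pm v$. This holds in the natural setting of the characterization, for instance when $\varphi$ is the identity with $D=X$ (the case matching \cite{nik} and \cite{angu}); I would state this hypothesis on $\varphi$ explicitly before carrying out the computation, so that the parallelogram law is obtained on all of $X$ and the Jordan--von Neumann theorem applies.
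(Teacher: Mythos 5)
Your proposal is correct and takes essentially the same route as the paper: i)$\Rightarrow$ii) via Lemma \ref{z}, ii)$\Rightarrow$iii) by specializing to $g=0$, and iii)$\Rightarrow$i) by taking $t=\tfrac12$ to get one half of the parallelogram law, substituting $u\pm v$ to get the reverse half, and invoking the Jordan--von Neumann theorem. Your closing caveat is in fact sharper than the paper itself, which asserts inequality (\ref{2}) ``for all $x,y\in X$'' even though $\varphi$ only maps $D$ into $D$, so the surjectivity-type hypothesis on $\varphi$ (or $\varphi=\mathrm{id}$, $D=X$) that you make explicit is silently assumed in the paper's own substitution step.
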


\begin{proof}
The implication i)$\Rightarrow $ii) follows by Lemma \ref{z}. To see that ii)%
$\Rightarrow $iii) take $g=0$. Clearly, $g$ is $\varphi $-convex function,
whence $f=c\left\Vert .\right\Vert ^{2}$ is strongly $\varphi $-convex with
modulus $c$. Consequently, $\left\Vert .\right\Vert ^{2}$ is strongly $%
\varphi $-convex with modulus $1$. Finaly, to prove iii)$\Rightarrow $i)
observe that by the strongly $\varphi $-convexity of $\left\Vert
.\right\Vert ^{2}$, we obtain%
\begin{equation*}
\left\Vert \frac{\varphi (x)+\varphi (y)}{2}\right\Vert ^{2}\leq \frac{%
\left\Vert \varphi (x)\right\Vert ^{2}}{2}+\frac{\left\Vert \varphi
(y)\right\Vert ^{2}}{2}-\frac{1}{4}\left\Vert \varphi (x)-\varphi
(y)\right\Vert ^{2}
\end{equation*}%
and hence%
\begin{equation}
\left\Vert \varphi (x)+\varphi (y)\right\Vert ^{2}+\left\Vert \varphi
(x)-\varphi (y)\right\Vert ^{2}\leq 2\left\Vert \varphi (x)\right\Vert
^{2}+2\left\Vert \varphi (y)\right\Vert ^{2}  \label{2}
\end{equation}%
for all $x,y\in X.$ Now, putting $u=\varphi (x)+\varphi (y)$ and $v=\varphi
(x)-\varphi (y)$ in (\ref{2}), we have%
\begin{equation}
2\left\Vert u\right\Vert ^{2}+2\left\Vert v\right\Vert ^{2}\leq \left\Vert
u+v\right\Vert ^{2}+\left\Vert u-v\right\Vert ^{2}  \label{3}
\end{equation}%
for all $u,v\in X.$

Conditions (\ref{2}) and (\ref{3}) mean that the norm $\left\Vert
.\right\Vert ^{2}$ satisfies the parallelogram law, which implies, by the
classical Jordan-Von Neumann theorem, that $(X,\left\Vert .\right\Vert )$ is
an inner product space. This completes to proof.
\end{proof}

Now, we give a new Hermite--Hadamard-type inequalities for strongly $\varphi 
$-convex functions with modulus $c$ as follows:

\begin{theorem}
If $f:\left[ a,b\right] \rightarrow \mathbb{R}$ is strongly $\varphi $-
convex with modulus $c>0$ for the continuous function $\varphi :\left[ a,b%
\right] \rightarrow \left[ a,b\right] ,$ then%
\begin{eqnarray}
&&f\left( \frac{\varphi (a)+\varphi (b)}{2}\right) +\frac{c}{12}\left(
\varphi (a)-\varphi (b)\right) ^{2}  \label{4} \\
&\leq &\frac{1}{\varphi (b)-\varphi (a)}\dint\limits_{\varphi (a)}^{\varphi
(b)}f(x)dx  \notag \\
&\leq &\frac{f(\varphi (a))+f(\varphi (b))}{2}-\frac{c}{6}\left( \varphi
(a)-\varphi (b)\right) ^{2}.  \notag
\end{eqnarray}
\end{theorem}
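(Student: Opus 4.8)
The plan is to reduce (\ref{4}) to the classical Hermite--Hadamard inequality for an ordinary convex function of one real variable, using the two lemmas already at hand. Since here $X=\mathbb{R}$ is an inner product space (with $\|x\|^{2}=x^{2}$) and $[a,b]$ is convex, Lemma \ref{z}(i) applies and the auxiliary function $g:=f-c\|\cdot\|^{2}$ is $\varphi$-convex. Writing $A:=\varphi(a)$ and $B:=\varphi(b)$, Lemma \ref{l}, taken at the endpoints $x=a,\ y=b$, then tells us that
\[
h(t):=g\bigl(tA+(1-t)B\bigr),\qquad t\in[0,1],
\]
is classically convex on $[0,1]$. (I assume tacitly that $A\neq B$, so that the middle member of (\ref{4}) is meaningful; if $A=B$ there is nothing to prove.)

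To $h$ I apply the classical chain $h(\tfrac12)\le\int_0^1 h(t)\,dt\le\tfrac{h(0)+h(1)}{2}$ and then translate the three quantities back to $f$. The endpoint and midpoint values are immediate: $h(0)=f(B)-cB^{2}$, $h(1)=f(A)-cA^{2}$, and $h(\tfrac12)=f\!\bigl(\tfrac{A+B}{2}\bigr)-c\bigl(\tfrac{A+B}{2}\bigr)^{2}$. For the integral I use the affine substitution $x=tA+(1-t)B$, $dx=(A-B)\,dt$, obtaining
\[
\int_0^1 h(t)\,dt=\frac{1}{B-A}\int_{A}^{B} f(x)\,dx-c\int_0^1\bigl(tA+(1-t)B\bigr)^{2}\,dt,
\]
where the first summand is exactly the middle member of (\ref{4}). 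This substitution identity holds whether $A<B$ or $A>B$, and the convexity of $h$ secures the integrability required. The leftover integral is the elementary $\int_0^1\bigl(tA+(1-t)B\bigr)^{2}\,dt=AB+\tfrac13(A-B)^{2}$.

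Inserting these expressions into the two classical inequalities and rearranging produces (\ref{4}); the only task is the bookkeeping of the quadratic constants. From the left inequality one gathers $c\bigl[\tfrac{(A+B)^{2}}{4}-AB-\tfrac13(A-B)^{2}\bigr]=-\tfrac{c}{12}(A-B)^{2}$, which yields the term $\tfrac{c}{12}\bigl(\varphi(a)-\varphi(b)\bigr)^{2}$ on the left of (\ref{4}); from the right inequality one gathers $c\bigl[-\tfrac{A^{2}+B^{2}}{2}+AB+\tfrac13(A-B)^{2}\bigr]=-\tfrac{c}{6}(A-B)^{2}$, which yields the term $-\tfrac{c}{6}\bigl(\varphi(a)-\varphi(b)\bigr)^{2}$ on the right.

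I do not anticipate a genuine obstacle: Lemmas \ref{z} and \ref{l} carry the whole conceptual load by converting strong $\varphi$-convexity into honest convexity of $h$, after which the statement is just the classical Hermite--Hadamard inequality together with two elementary integral evaluations. The single point that needs care is that strong $\varphi$-(mid)convexity controls $f$ only at points of the form $tA+(1-t)B$, whereas the reflected points $(1-t)A+tB$ need not lie in $\varphi([a,b])$; hence one cannot feed the midconvexity inequality of $f$ directly into the left estimate. Routing that estimate through the convexity of $h$, rather than through the midconvexity of $f$, is precisely what avoids this difficulty.
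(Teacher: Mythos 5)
Your proof is correct, but it takes a genuinely different route from the paper's. You reduce the statement to the classical Hermite--Hadamard inequality: Lemma \ref{z}(i) (applicable since $\mathbb{R}$ is an inner product space) converts strong $\varphi$-convexity of $f$ into $\varphi$-convexity of $g=f-c\left\Vert \cdot \right\Vert ^{2}$, Lemma \ref{l} then makes $h(t)=g(t\varphi (a)+(1-t)\varphi (b))$ classically convex on $[0,1]$, and both correction terms $\frac{c}{12}\left( \varphi (a)-\varphi (b)\right) ^{2}$ and $-\frac{c}{6}\left( \varphi (a)-\varphi (b)\right) ^{2}$ drop out of the single elementary identity $\int_{0}^{1}\left( tA+(1-t)B\right) ^{2}dt=AB+\frac{1}{3}(A-B)^{2}$; I checked your constant bookkeeping and it is right. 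The paper instead works directly with the strong convexity inequality (\ref{1}): for the left bound it writes $\frac{\varphi (a)+\varphi (b)}{2}$ as the midpoint of the pair $t\varphi (a)+(1-t)\varphi (b)$ and $(1-t)\varphi (a)+t\varphi (b)$, picking up the term $\frac{c}{4}(1-2t)^{2}\left( \varphi (a)-\varphi (b)\right) ^{2}$ whose integral over $[0,1]$ is $\frac{c}{12}\left( \varphi (a)-\varphi (b)\right) ^{2}$, and for the right bound it integrates (\ref{1}) at $x=a$, $y=b$ using $\int_{0}^{1}t(1-t)dt=\frac{1}{6}$, invoking Lemma \ref{l} only to justify the integration. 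Your reduction is conceptually cleaner --- one application of classical Hermite--Hadamard produces both constants simultaneously --- at the price of routing through Lemma \ref{z}, which needs the inner-product structure (automatic on $\mathbb{R}$, but unavailable if one tried to adapt the argument to a general normed setting, where the paper's direct computation would survive). One correction to your closing remark: since $\varphi$ is continuous, the intermediate value theorem puts the entire segment between $\varphi (a)$ and $\varphi (b)$ inside $\varphi ([a,b])$, so the reflected points $(1-t)\varphi (a)+t\varphi (b)$ \emph{are} of the form $\varphi (z)$ with $z\in \lbrack a,b]$; feeding the strong midpoint inequality of $f$ directly into the left estimate is therefore legitimate --- it is exactly what the paper does --- and the obstacle you carefully route around is not actually present.
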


\begin{proof}
From the strongly $\varphi $- convexity of $f$, we have%
\begin{eqnarray*}
f\left( \frac{\varphi (a)+\varphi (b)}{2}\right) &=&f\left( \frac{t\varphi
(a)+(1-t)\varphi (b)}{2}+\frac{(1-t)\varphi (a)+t\varphi (b)}{2}\right) \\
&& \\
&\leq &\frac{1}{2}f\left( t\varphi (a)+(1-t)\varphi (b)\right) +\frac{1}{2}%
f\left( (1-t)\varphi (a)+t\varphi (b)\right) -\frac{c}{4}\left( 1-2t\right)
^{2}\left( \varphi (a)-\varphi (b)\right) ^{2}.
\end{eqnarray*}%
By Lemma \ref{l}, it follows that the previous inequality can be integrated
with respect to $t$ over $[0,1]$, getting%
\begin{eqnarray*}
&&f\left( \frac{\varphi (a)+\varphi (b)}{2}\right) +\frac{c}{12}\left(
\varphi (a)-\varphi (b)\right) ^{2} \\
&& \\
&\leq &\frac{1}{2}\dint\limits_{0}^{1}f\left( t\varphi (a)+(1-t)\varphi
(b)\right) dt+\frac{1}{2}\dint\limits_{0}^{1}f\left( (1-t)\varphi
(a)+t\varphi (b)\right) dt.
\end{eqnarray*}%
In the first integral, we substitute $x=t\varphi (a)+(1-t)\varphi (b)$.
Meanwhile, in the second integral we also use the substitution $%
x=(1-t)\varphi (a)+t\varphi (b)$, we obtain%
\begin{eqnarray*}
&&f\left( \frac{\varphi (a)+\varphi (b)}{2}\right) +\frac{c}{12}\left(
\varphi (a)-\varphi (b)\right) ^{2} \\
&& \\
&\leq &\frac{1}{2\left( \varphi (a)-\varphi (b)\right) }\dint\limits_{%
\varphi (b)}^{\varphi (a)}f\left( x\right) dx+\frac{1}{2\left( \varphi
(b)-\varphi (a)\right) }\dint\limits_{\varphi (a)}^{\varphi (b)}f\left(
x\right) dx \\
&& \\
&=&\frac{1}{\varphi (b)-\varphi (a)}\dint\limits_{\varphi (a)}^{\varphi
(b)}f(x)dx.
\end{eqnarray*}

In order to prove the second inequality, we start from the strongly $\varphi 
$- convexity of $f$ meaning that for every $t\in \lbrack 0,1]$ one has 
\begin{equation*}
f(t\varphi (a)+(1-t)\varphi (b))\leq tf(\varphi (a))+(1-t)f(\varphi
(b))-ct(1-t)\left( \varphi (a)-\varphi (b)\right) ^{2}.
\end{equation*}%
Lemma \ref{l}, allows us to integrate both sides of this inequality on $%
[0,1] $, we get%
\begin{equation*}
\dint\limits_{0}^{1}f(t\varphi (a)+(1-t)\varphi (b))dt\leq f(\varphi
(a))\dint\limits_{0}^{1}tdt+f(\varphi
(b))\dint\limits_{0}^{1}(1-t)dt-c\left( \varphi (a)-\varphi (b)\right)
^{2}\dint\limits_{0}^{1}t(1-t)dt.
\end{equation*}%
The previous substitution in the first side of this inequality leads to%
\begin{equation*}
\frac{1}{\left( \varphi (a)-\varphi (b)\right) }\dint\limits_{\varphi
(b)}^{\varphi (a)}f\left( x\right) dx\leq \frac{f(\varphi (a))}{2}+\frac{%
f(\varphi (b))}{2}-\frac{c}{6}\left( \varphi (a)-\varphi (b)\right) ^{2}
\end{equation*}%
which gives the second inequality of (\ref{4}). This completes to proof.
\end{proof}

\begin{theorem}
If $f:[a,b]\rightarrow \mathbb{R}$ is strongly $\varphi $- convex with
modulus $c>0$ for the continuous function $\varphi :[a,b]\rightarrow \lbrack
a,b],$ then%
\begin{eqnarray}
&&\frac{1}{\varphi (b)-\varphi (a)}\dint\limits_{\varphi (a)}^{\varphi
(b)}f\left( x\right) f\left( a+b-x\right) dx  \label{5} \\
&&  \notag \\
&\leq &\frac{\left[ f^{2}(\varphi (x))+f^{2}(\varphi (y))\right] }{6}+\frac{%
2f(\varphi (x))f(\varphi (y))}{3}  \notag \\
&&  \notag \\
&&-\frac{c}{6}\left( \varphi (x)-\varphi (y)\right) ^{2}\left[ f(\varphi
(x))+f(\varphi (y))\right] -\frac{c^{2}}{30}\left( \varphi (x)-\varphi
(y)\right) ^{4}.  \notag
\end{eqnarray}
\end{theorem}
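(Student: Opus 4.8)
Let me carefully understand what the final theorem is actually claiming.

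The plan is to linearize the product integral over $[\varphi(a),\varphi(b)]$ by the same affine change of variable used in the preceding theorem, and then to estimate each factor separately by strong $\varphi$-convexity before integrating a polynomial in $t$. First I would set $x=t\varphi(a)+(1-t)\varphi(b)$ with $t\in[0,1]$, so that $dx=(\varphi(a)-\varphi(b))\,dt$ and, under the pairing, the companion argument $a+b-x$ corresponds to the reflected point $(1-t)\varphi(a)+t\varphi(b)$ of the integration interval. With this substitution the normalized left-hand side of (\ref{5}) becomes $\int_0^1 f\left(t\varphi(a)+(1-t)\varphi(b)\right) f\left((1-t)\varphi(a)+t\varphi(b)\right)\,dt$, the values $f(\varphi(x)),f(\varphi(y))$ on the right being read at the endpoints $x=a$ and $y=b$.

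Next, applying the strong $\varphi$-convexity inequality (\ref{1}) to each factor with complementary weights gives $f\left(t\varphi(a)+(1-t)\varphi(b)\right)\le tf(\varphi(a))+(1-t)f(\varphi(b))-ct(1-t)\left(\varphi(a)-\varphi(b)\right)^2$ together with the mirror estimate obtained by exchanging $t$ and $1-t$. Writing $M=f(\varphi(a))$, $N=f(\varphi(b))$ and $P=c\left(\varphi(a)-\varphi(b)\right)^2$, and multiplying the two one-sided bounds, I would reach $f(x)f(a+b-x)\le U(t)V(t)$ with $U(t)=tM+(1-t)N-t(1-t)P$ and $V(t)=(1-t)M+tN-t(1-t)P$. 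Lemma \ref{l} guarantees that $t\mapsto f(t\varphi(a)+(1-t)\varphi(b))$ is integrable on $[0,1]$, so termwise integration is legitimate.

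I would then expand $U(t)V(t)=t(1-t)(M^2+N^2)+\left[t^2+(1-t)^2\right]MN-t(1-t)(M+N)P+t^2(1-t)^2P^2$ and integrate using $\int_0^1 t(1-t)\,dt=\frac16$, $\int_0^1\left[t^2+(1-t)^2\right]dt=\frac23$, and $\int_0^1 t^2(1-t)^2\,dt=\frac{1}{30}$. This reproduces the first three terms of the bound in (\ref{5}) verbatim, namely $\frac16(M^2+N^2)+\frac23 MN-\frac16(M+N)P=\frac16\bigl[f^2(\varphi(a))+f^2(\varphi(b))\bigr]+\frac23 f(\varphi(a))f(\varphi(b))-\frac{c}{6}\left(\varphi(a)-\varphi(b)\right)^2\bigl[f(\varphi(a))+f(\varphi(b))\bigr]$.

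The main obstacle, and the only nonroutine step, is the final $c^2$-term, and two points need care here. First, multiplying the two convexity estimates is valid only once both $U(t),V(t)$ and the factors of $f$ are nonnegative along the segment, so I would first secure this positivity or reduce to it. Second, and more seriously, the quadratic contribution $\int_0^1 t^2(1-t)^2P^2\,dt=\frac{1}{30}P^2=\frac{c^2}{30}\left(\varphi(a)-\varphi(b)\right)^4$ enters this estimate with a positive sign, whereas (\ref{5}) records it with a negative sign. Matching the stated form therefore requires replacing the crude product bound $f(x)f(a+b-x)\le U(t)V(t)$ by a sharper pointwise estimate that suppresses the $t^2(1-t)^2P^2$ contribution and instead yields a $-\frac{1}{30}P^2$ correction; isolating this term and controlling its sign, rather than any of the elementary integrations, is where the substance of the proof lies.
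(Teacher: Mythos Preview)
Your approach is exactly the paper's: write the two strong $\varphi$-convexity bounds at complementary parameters, multiply them, integrate over $[0,1]$, and change variable. The paper obtains precisely the pointwise product bound you write as $U(t)V(t)$, with the term $+c^{2}t^{2}(1-t)^{2}(\varphi(a)-\varphi(b))^{4}$ carrying a \emph{positive} sign, and then on the very next line records the integrated contribution as $-c^{2}(\varphi(a)-\varphi(b))^{4}\int_{0}^{1}t^{2}(1-t)^{2}\,dt$ without comment. The sign discrepancy you flag is therefore not a missing idea on your part but an unexplained sign flip in the paper itself; there is no ``sharper pointwise estimate'' hiding behind it, and you should not look for one. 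Your caution about multiplying two one-sided inequalities without first securing nonnegativity of the factors is likewise legitimate and is simply ignored in the paper's argument.

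A smaller point you pass over: under $x=t\varphi(a)+(1-t)\varphi(b)$ the companion argument is $\varphi(a)+\varphi(b)-x$, not $a+b-x$; this is another apparent slip in the stated theorem, and the paper's own substitution step does not address it either.
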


\begin{proof}
Since $f$ is strongly $\varphi $-convex with respect to $c>0$, we have that
for all $t\in \left[ 0,1\right] $%
\begin{equation}
f(t\varphi (a)+(1-t)\varphi (b))\leq tf(\varphi (a))+(1-t)f(\varphi
(b))-ct(1-t)\left( \varphi (a)-\varphi (b)\right) ^{2}  \label{6}
\end{equation}%
and 
\begin{equation}
f((1-t)\varphi (a)+t\varphi (b))\leq (1-t)f(\varphi (a))+tf(\varphi
(b))-ct(1-t)\left( \varphi (a)-\varphi (b)\right) ^{2}.  \label{7}
\end{equation}%
Multiplying both sides of (\ref{6}) by (\ref{7}), it follows that%
\begin{eqnarray}
&&f(t\varphi (a)+(1-t)\varphi (b))f((1-t)\varphi (a)+t\varphi (b))  \label{8}
\\
&&  \notag \\
&\leq &t(1-t)\left[ f^{2}(\varphi (a))+f^{2}(\varphi (b))\right] +\left(
t^{2}+(1-t)^{2}\right) f(\varphi (a))f(\varphi (b))  \notag \\
&&  \notag \\
&&-ct(1-t)\left( \varphi (a)-\varphi (b)\right) ^{2}\left[ f(\varphi
(a))+f(\varphi (b))\right] +c^{2}t^{2}(1-t)^{2}\left( \varphi (a)-\varphi
(b)\right) ^{4}.  \notag
\end{eqnarray}%
Integrating the inequality (\ref{8}) with respect to $t$ over $\left[ 0,1%
\right] $, we obtain%
\begin{eqnarray*}
&&\dint\limits_{0}^{1}f(t\varphi (a)+(1-t)\varphi (b))f((1-t)\varphi
(a)+t\varphi (b))dt \\
&& \\
&\leq &\left[ f^{2}(\varphi (a))+f^{2}(\varphi (b))\right]
\dint\limits_{0}^{1}t(1-t)dt+f(\varphi (a))f(\varphi
(b))\dint\limits_{0}^{1}\left( t^{2}+(1-t)^{2}\right) dt \\
&& \\
&&-c\left( \varphi (a)-\varphi (b)\right) ^{2}\left[ f(\varphi
(a))+f(\varphi (b))\right] \dint\limits_{0}^{1}t(1-t)dt-c^{2}\left( \varphi
(a)-\varphi (b)\right) ^{4}\dint\limits_{0}^{1}t^{2}(1-t)^{2}dt \\
&& \\
&=&\frac{\left[ f^{2}(\varphi (a))+f^{2}(\varphi (b))\right] }{6}+\frac{%
2f(\varphi (a))f(\varphi (b))}{3} \\
&& \\
&&-\frac{c}{6}\left( \varphi (a)-\varphi (b)\right) ^{2}\left[ f(\varphi
(a))+f(\varphi (b))\right] -\frac{c^{2}}{30}\left( \varphi (a)-\varphi
(b)\right) ^{4}.
\end{eqnarray*}%
If we change the variable $x:=t\varphi (a)+(1-t)\varphi (b)$, $t\in \left[
0,1\right] $, we get the required inequality in (\ref{5}). This proves the
theorem.
\end{proof}

\begin{theorem}
If $f,g:[a,b]\rightarrow \mathbb{R}$ is strongly $\varphi $- convex with
modulus $c>0$ for the continuous function $\varphi :[a,b]\rightarrow \lbrack
a,b],$ then%
\begin{eqnarray}
&&\frac{1}{\varphi (b)-\varphi (a)}\dint\limits_{\varphi (a)}^{\varphi
(b)}f(x)dx  \label{9} \\
&&  \notag \\
&\leq &\frac{M(a,b)}{3}+\frac{N(a,b)}{6}-\frac{c}{12}\left( \varphi
(a)-\varphi (b)\right) ^{2}S(a,b)+\frac{c^{2}}{30}\left( \varphi (a)-\varphi
(b)\right) ^{4}  \notag
\end{eqnarray}%
where%
\begin{equation*}
M(a,b)=f(\varphi (a))g(\varphi (a))+f(\varphi (b))g(\varphi (b))
\end{equation*}%
\begin{equation*}
N(a,b)=f(\varphi (a))g(\varphi (b))+f(\varphi (b))g(\varphi (a))
\end{equation*}%
\begin{equation*}
S(a,b)=f\left( \varphi (a)\right) +f\left( \varphi (b)\right) +g\left(
\varphi (a)\right) +g\left( \varphi (b)\right) .
\end{equation*}
\end{theorem}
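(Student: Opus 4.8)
The plan is to parallel the proof of the preceding theorem, except that now I start from two strong $\varphi$-convexity inequalities — one for $f$ and one for $g$ — and multiply them before integrating. For every $t\in[0,1]$, strong $\varphi$-convexity gives $f(t\varphi(a)+(1-t)\varphi(b))\le tf(\varphi(a))+(1-t)f(\varphi(b))-ct(1-t)\left(\varphi(a)-\varphi(b)\right)^{2}$ together with the analogous bound for $g$. Writing $u=tf(\varphi(a))+(1-t)f(\varphi(b))$, $v=tg(\varphi(a))+(1-t)g(\varphi(b))$ and $w=ct(1-t)\left(\varphi(a)-\varphi(b)\right)^{2}$, these read $f(\cdot)\le u-w$ and $g(\cdot)\le v-w$. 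Assuming the two right-hand sides remain nonnegative — the same implicit hypothesis underlying the companion theorem for inequality (\ref{5}) — multiplying yields $f(\cdot)\,g(\cdot)\le (u-w)(v-w)$, the central estimate of the proof.

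The core computation is then the expansion $(u-w)(v-w)=uv-w(u+v)+w^{2}$, which I would integrate block by block over $[0,1]$. Lemma \ref{l} ensures that $t\mapsto f(t\varphi(a)+(1-t)\varphi(b))$ and $t\mapsto g(t\varphi(a)+(1-t)\varphi(b))$ are classically convex, hence Riemann integrable, so the product is integrable and termwise integration is legitimate. The only inputs are the elementary moments $\int_{0}^{1}t^{2}\,dt=\int_{0}^{1}(1-t)^{2}\,dt=\frac{1}{3}$, $\int_{0}^{1}t(1-t)\,dt=\frac{1}{6}$, $\int_{0}^{1}t^{2}(1-t)\,dt=\int_{0}^{1}t(1-t)^{2}\,dt=\frac{1}{12}$ and $\int_{0}^{1}t^{2}(1-t)^{2}\,dt=\frac{1}{30}$.

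Feeding these in, the $uv$-block contributes $\frac{f(\varphi(a))g(\varphi(a))+f(\varphi(b))g(\varphi(b))}{3}+\frac{f(\varphi(a))g(\varphi(b))+f(\varphi(b))g(\varphi(a))}{6}=\frac{M(a,b)}{3}+\frac{N(a,b)}{6}$; the cross term $-w(u+v)$ contributes $-\frac{c}{12}\left(\varphi(a)-\varphi(b)\right)^{2}S(a,b)$, once the factor $t(1-t)$ is paired with the linear parts of $u+v$ and the resulting coefficient is recognized as $f(\varphi(a))+f(\varphi(b))+g(\varphi(a))+g(\varphi(b))=S(a,b)$; and the square $w^{2}$ contributes $\frac{c^{2}}{30}\left(\varphi(a)-\varphi(b)\right)^{4}$. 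This reproduces exactly the right-hand side of the claimed inequality. Finally I would apply the substitution $x=t\varphi(a)+(1-t)\varphi(b)$, $dx=\left(\varphi(a)-\varphi(b)\right)dt$, which converts $\int_{0}^{1}f(\cdot)g(\cdot)\,dt$ into $\frac{1}{\varphi(b)-\varphi(a)}\int_{\varphi(a)}^{\varphi(b)}f(x)g(x)\,dx$ — the integrand being the product $fg$, which corrects an evident typo in the printed left-hand side.

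The step I expect to be the genuine obstacle is the positivity required to multiply the two convexity inequalities: the bounds $u-w$ and $v-w$ are not nonnegative for arbitrary $f,g$, so passing from $f\le u-w$ and $g\le v-w$ to $fg\le(u-w)(v-w)$ needs either $f,g\ge 0$ with the bounds staying nonnegative, or an explicit restriction to that regime. By contrast, the algebraic expansion and the matching of the three integrated blocks to $M$, $N$ and $S$ is entirely routine and mirrors the calculation already carried out for (\ref{5}).
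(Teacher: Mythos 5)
Your proposal is correct and follows essentially the same route as the paper's own proof: multiply the two strong $\varphi$-convexity bounds (the paper's (\ref{11}) and (\ref{12})), expand the product as $uv-w(u+v)+w^{2}$, integrate termwise using the same elementary moments $\int_{0}^{1}t^{2}dt=\frac{1}{3}$, $\int_{0}^{1}t(1-t)dt=\frac{1}{6}$, $\int_{0}^{1}t^{2}(1-t)dt=\frac{1}{12}$, $\int_{0}^{1}t^{2}(1-t)^{2}dt=\frac{1}{30}$, and finish with the substitution $x=t\varphi(a)+(1-t)\varphi(b)$. The two caveats you flag are defects of the printed version rather than of your argument: the paper multiplies (\ref{11}) by (\ref{12}) with no nonnegativity hypothesis on $f$ and $g$ (exactly the unstated assumption you identify as the genuine obstacle), and the left-hand side of (\ref{9}) should indeed read $\int_{\varphi(a)}^{\varphi(b)}f(x)g(x)\,dx$, which is what the substitution in the paper's own proof actually produces.
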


\begin{proof}
Since$f,g:[a,b]\rightarrow \mathbb{R}$ is strongly $\varphi $- convex with
modulus $c>0$, we have%
\begin{equation}
f\left( t\varphi (a)+(1-t)\varphi (b)\right) \leq tf\left( \varphi
(a)\right) +(1-t)f\left( \varphi (b)\right) -ct\left( 1-t\right) \left(
\varphi (a)-\varphi (b)\right) ^{2}  \label{11}
\end{equation}%
\begin{equation}
g\left( t\varphi (a)+(1-t)\varphi (b)\right) \leq tg\left( \varphi
(a)\right) +(1-t)g\left( \varphi (b)\right) -ct\left( 1-t\right) \left(
\varphi (a)-\varphi (b)\right) ^{2}.  \label{12}
\end{equation}%
Multiplying both sides of (\ref{11}) by (\ref{12}), it follows that%
\begin{eqnarray*}
&&f\left( t\varphi (a)+(1-t)\varphi (b)\right) g\left( t\varphi
(a)+(1-t)\varphi (b)\right) \\
&& \\
&\leq &t^{2}f\left( \varphi (a)\right) g\left( \varphi (a)\right)
+(1-t)^{2}f(\varphi (b))f(\varphi (b)) \\
&& \\
&&+t(1-t)\left[ f(\varphi (a))g(\varphi (b))+f(\varphi (b))g(\varphi (a))%
\right] \\
&& \\
&&-ct^{2}\left( 1-t\right) \left( \varphi (a)-\varphi (b)\right) ^{2}\left[
f\left( \varphi (a)\right) +g\left( \varphi (a)\right) \right] \\
&& \\
&&-ct\left( 1-t\right) ^{2}\left( \varphi (a)-\varphi (b)\right) ^{2}\left[
f\left( \varphi (b)\right) +g\left( \varphi (b)\right) \right] \\
&& \\
&&+c^{2}t^{2}\left( 1-t\right) ^{2}\left( \varphi (a)-\varphi (b)\right)
^{4}.
\end{eqnarray*}%
By Lemma \ref{l}, it follows that the previous inequality can be integrated
with respect to $t$ over $[0,1]$, getting%
\begin{eqnarray*}
&&\dint\limits_{0}^{1}f\left( t\varphi (a)+(1-t)\varphi (b)\right) g\left(
t\varphi (a)+(1-t)\varphi (b)\right) dt \\
&\leq &f\left( \varphi (a)\right) g\left( \varphi (a)\right)
\dint\limits_{0}^{1}t^{2}dt+f(\varphi (b))f(\varphi
(b))\dint\limits_{0}^{1}(1-t)^{2}dt \\
&&+\left[ f(\varphi (a))g(\varphi (b))+f(\varphi (b))g(\varphi (a))\right]
\dint\limits_{0}^{1}t(1-t)dt \\
&&-c\left( \varphi (a)-\varphi (b)\right) ^{2}\left[ f\left( \varphi
(a)\right) +g\left( \varphi (a)\right) \right] \dint\limits_{0}^{1}t^{2}%
\left( 1-t\right) dt \\
&&-c\left( \varphi (a)-\varphi (b)\right) ^{2}\left[ f\left( \varphi
(b)\right) +g\left( \varphi (b)\right) \right] \dint\limits_{0}^{1}t\left(
1-t\right) ^{2}dt \\
&&+c^{2}\left( \varphi (a)-\varphi (b)\right)
^{4}\dint\limits_{0}^{1}t^{2}\left( 1-t\right) ^{2}dt.
\end{eqnarray*}%
In the first integral, we substitute $x=t\varphi (a)+(1-t)\varphi (b)$ and
simple integrals calculated, we obtain the required inequality in (\ref{9}).
\end{proof}

\end{document}